\newtheorem{theorem}{Theorem}[section]
\newtheorem{lemma}[theorem]{Lemma}
\newtheorem{problem}[theorem]{Problem}
\title{\textbf{Tight toughness, isolated toughness and binding number bounds
		for the $\{K_2,C_n\}$-factors} }
\author{Xiaxia Guan, Tianlong Ma, Chao Shi\\
	\small School of Mathematical Sciences\\[-0.8ex]
	\small Xiamen University\\[-0.8ex]
	\small P. R. China\\
	\small\tt Email: gxx0544@126.com, tianlongma@aliyun.com, cshi@aliyun.com}
\date{}
\begin{document}

\maketitle
\begin{abstract}  The $\{K_2,C_n\}$-factor of a graph is a spanning subgraph whose each component is either $K_2$ or $C_n$. In this paper, a sufficient condition with regard to tight toughness, isolated toughness and binding number bounds to guarantee the existence of the $\{K_2,C_{2i+1}| i\geq 2 \}$-factor for any graph is obtained, which answers a problem due to Gao and Wang (J. Oper. Res. Soc. China (2021), https://doi.org/10.1007/s40305-021-00357-6).
\\
{\bf Keywords:}  Component factor; toughness; isolated toughness; binding number \\[2mm]
{\bf AMS subject classification 2020:} 05C42; 05C38.
\end{abstract}

\section{Introduction}

All graphs considered in this paper are simple, finite and undirected. Given a graph $G$, let $V(G)$ and $E(G)$ denote the sets of vertices and edges of $G$, respectively. For a subset $S\subseteq V(G)$, we use $G-S$ to
denote the subgraph of $G$ induced by the set $V(G)\setminus S$. Let
$N_G(S)$ denote the set of all neighbours of the vertices in $S$, and let $c(G-S)$ and $iso(G-S)$ denote the numbers of components and isolated vertices in $G-S$, respectively.

For a set of connected
graphs $\mathcal{S}$, a spanning subgraph $F$ of $G$ is called an \emph{$\mathcal{S}$-factor} if each component of $F$ is isomorphic to a member of $\mathcal{S}$. In particular an $\mathcal{S}$-factor is a $P_{\geq k}$-factor if $\mathcal{S}=\{P_k, P_{k+1},\ldots\}$, where $P_k$ is a path with $k$ vertices. We refer the readers to book \cite{Yu} for more details on graph factors.

In \cite{Chvatal}, Chv\'{a}tal defined the \emph{toughness} of a graph $G$, denoted by $t(G)$, as follows: if $G$ is a completed graph, then $t(G)=+\infty$; otherwise
\[t(G)=\min \left \{\frac{|S|}{c(G-S)}\bigg|S\subseteq V(G), c(G-S)\geq 2 \right\}.\]

Zhou et al. \cite{Zhou1} showed that for a graph $G$ with $t(G)\geq \frac{2}{3}$, $G$ has a $P_{\geq 3}$-factor.
Liu and Zhang \cite{Liu} showed that for a graph $G$ with $t(G)\geq 1$, $G$ has a $\{K_2,C_{2i+1}|i \geq 1\}$-factor.
The further relationships between toughness and graph factors were studied in \cite{Gao1, Yuan}.

To study fractional factors, a new parameter of a graph $G$, called \emph{isolated toughness} and denoted by $I(G)$, is introduced by Yang et al. in \cite{Yang}, which is defined as follows: if $G$ is a complete
graph, then $I(G)=+ \infty $; otherwise,
\[I(G)=\min \left \{\frac{|S|}{iso(G-S)}\bigg |S\subseteq V(G), iso(G-S)\geq 2 \right\}.\]

In \cite{Kano}, Kano et al. proved that if $I(G)\geq \frac{3}{2}$ for any graph $G$, then $G$ admits a $P_{\geq 3}$-factor. The fractional Tutte $1$-factor theorem \cite{Scheinerman} stated that a graph $G$ has a $\{K_2,C_{2i+1}|i \geq 1\}$-factor if and only if $iso(G-S)\leq |S|$ for any $S\subseteq V(G)$. It follows that $G$ admits a $\{K_2,C_{2i+1}|i \geq 1\}$-factor if and only if $I(G)\geq 1$.

Further, a variant of isolated toughness was defined by Zhang and Liu \cite{Zhang}, denoted by $I'(G)$: if $G$ is a completed graph, then $I'(G)=+\infty$; otherwise
\[I'(G)=\min \left \{\frac{|S|}{iso(G-S)-1}\bigg|S\subseteq V(G), iso(G-S)\geq 2 \right\}.\]

Recently, Gao and Wang \cite{Gao} proved that if $I'(G)> 2$ for any graph $G$, then $G$ admits a $P_{\geq 3}$-factor. Similar to isolated toughness, by the fractional Tutte $1$-factor theorem, we have that $G$ admits a $\{K_2,C_{2i+1}|i \geq 1\}$-factor if and only if $I'(G)> 1$.

In \cite{Woodall}, Woodall defined the \emph{binding number} of $G$, denoted by $bind(G)$, as follows:
\[bind(G)=\min \left \{\frac{|N_G(S)|}{|S|}\bigg|\emptyset\neq S\subseteq V(G), N_G(S)\neq V(G)\right\}.\]

Binding number plays an important role in the research of graph factors. In 1971, Anderson \cite{Anderson} showed for a graph $G$ of even order with $bind(G)\geq \frac{4}{3}$, $G$ has a $1$-factor. Later, Woodall \cite{Woodall} showed for a graph $G$ with $bind(G)\geq \frac{3}{2}$, $G$ has a $2$-factor. Gao and Wang \cite{Gao} proved that if $bind(G)\geq \frac{5}{4}$ for any graph $G$, then $G$ admits a $P_{\geq 3}$-factor. Moreover, they also proved that a graph $G$ admits a $\{K_2,C_{2i+1}|i \geq 1\}$-factor if and only if $bind(G)\geq 1$. We refer to the papers \cite{Katerinis, Zhou} for more details on binding number.

In \cite{Woodall}, Woodall proved the relation $t(G)\geq bind(G)-1$ for any graph $G$. For any graph $G$, let $S$ be a subset of vertices in $G$ such that $I(G)=\frac{|S|}{iso(G-S)}$. Note that $iso(G-S)\leq c(G-S)$, and thus
 \[t(G)\leq \frac{|S|}{c(G-S)}\leq \frac{|S|}{iso(G-S)}=I(G).\]
Let $Y$ be the set of the isolated vertices in $G-S$. Clearly, $iso(G-S)=|Y|$ and $N_G(Y)\subseteq S$. Then $|N_G(Y)|\leq |S|$. Therefore we have \[bind(G)\leq \frac{|N_G(Y)|}{|Y|}\leq \frac{|S|}{iso(G-S)}=I(G).\]
The relation $I(G)< I'(G)$ is trivial from the definitions. Thus we have $t(G)\leq I(G)< I'(G)$ and $bind(G)\leq I(G)< I'(G)$.
We think that these relations are simple and immediate, but it is a surprise that we can't find it in the existing literature.

In \cite{Gao}, Gao and Wang also simply surveyed the important results on component factors, and proposed the following problem.

 \begin{problem}\emph{\cite{Gao}}\label{Problem}
	What is the tight toughness, isolated toughness and binding number bounds
	for a graph $G$ to have a $\{K_2,C_{2i+1}|i \geq 2\}$-factor?
\end{problem}

In this paper, we give a sufficient condition with regard to tight toughness, isolated toughness and binding number bounds to guarantee the existence of the $\{K_2,C_{2i+1}|i \geq 2\}$-factor for any graph, and then Problem \ref{Problem} is answered.
\begin{theorem}\label{Mainth}
	If a graph $G$ of order at least $5$ satisfies one of the following four conditions:
	
	{\bf(i)} $t(G)\geq 1$;
	
	{\bf(ii)} $I(G)\geq 3$;
	
	{\bf(iii)} $I'(G)> 5$;
	
	{\bf(iv)} $bind(G)>\frac{4}{3}$,\\
	then $G$ admits a $\{K_2,C_{2i+1}|i\geq 2\}$-factor.
\end{theorem}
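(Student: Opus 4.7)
The plan is to prove the theorem by a unified Tutte-type obstruction argument. From the hypothetical non-existence of a $\{K_2, C_{2i+1}\mid i\geq 2\}$-factor I extract a set $S\subseteq V(G)$ satisfying
\[ iso(G-S)+t_3(G-S)\geq |S|+1, \]
where $t_3(G-S)$ denotes the number of $K_3$-components of $G-S$, and then show this single inequality directly contradicts each of (i)--(iv). The intuition is the same as for the usual Tutte and fractional-Tutte obstructions: an isolated vertex of $G-S$ must be saturated by a factor $K_2$ whose other endpoint lies in $S$, while a $K_3$-component of $G-S$ cannot remain as a $C_3$-component of the factor (since $C_3$ is disallowed here) and hence also forces at least one of its vertices to be matched into $S$.

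The first main step produces the obstruction. Each of (i)--(iv) already guarantees a $\{K_2,C_{2i+1}\mid i\geq 1\}$-factor of $G$: (i) via Liu and Zhang~\cite{Liu}, and (ii)--(iv) via the equivalent criteria $I(G)\geq 1$, $I'(G)>1$, $bind(G)\geq 1$ combined with the fractional Tutte $1$-factor theorem as recorded in the introduction. Pick such a factor $F$ with the minimum possible number of $C_3$-components and assume toward a contradiction that some $C_3$-component $T=\{u_1,u_2,u_3\}$ remains. Since $|V(G)|\geq 5$, $T$ does not exhaust $V(G)$, and I run an augmenting-path analysis: if some $u_i$ has a neighbour $v\notin T$ lying in a $K_2$-component $\{v,w\}$ of $F$ with $u_jw\in E(G)$ for some $j\ne i$, then the $5$-cycle $u_i v w u_j u_k u_i$ (with $u_k$ the third vertex of $T$) plus the remaining components of $F$ is a $\{K_2,C_{2i+1}\mid i\geq 1\}$-factor with one fewer $C_3$, contradicting minimality; an analogous rerouting handles the case where $v$ lies on a long odd cycle of $F$. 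When no such swap is available, the neighbours of each $C_3$-component of $F$ outside itself are so restricted that collecting the $F$-mates of such exceptional neighbours together with well-chosen vertices from the long odd cycles of $F$ defines a set $S$ for which every $C_3$-component of $F$ remains a $K_3$-component of $G-S$, and a short counting check produces $iso(G-S)+t_3(G-S)\geq |S|+1$.

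With the obstruction in hand, the four conclusions follow from short calculations. For (i), $c(G-S)\geq iso(G-S)+t_3(G-S)\geq |S|+1\geq 2$ (small-$|S|$ corner cases handled using $|V(G)|\geq 5$), so $t(G)\leq |S|/c(G-S)\leq |S|/(|S|+1)<1$. For (ii) and (iii), enlarge $S$ to $S' = S\cup W$, where $W$ consists of two vertices of each $K_3$-component of $G-S$: then $|S'|=|S|+2t_3$ and $iso(G-S')\geq iso(G-S)+t_3(G-S)$, and the obstruction inequality yields $I(G)\leq (|S|+2t_3)/(iso+t_3)<3$ and $I'(G)\leq (|S|+2t_3)/(iso+t_3-1)\leq 5$. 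For (iv), let $X$ be the union of the isolated vertices and the $K_3$-components of $G-S$; then $|X|=iso+3t_3$, $|N_G(X)|\leq |S|+3t_3$ (because each vertex of $X$ has its $G$-neighbours only in $S$ or inside its own $K_3$-component), and using $iso+t_3\geq |S|+1$ one obtains $bind(G)\leq (|S|+3t_3)/(iso+3t_3)<4/3$.

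The main obstacle is the augmenting-path analysis in the second paragraph: I must enumerate every local configuration around a $C_3$-component of $F$ for which no swap decreases the number of $C_3$'s, and then show that every such blocked configuration contributes a genuine $K_3$-component of $G-S$ for one uniformly chosen $S$. A secondary nuisance is the collection of boundary cases --- when $iso(G-S)+t_3(G-S)=1$ (which forces $|S|=0$ and $G$ to have an isolated vertex or a $K_3$ as a whole connected component), when the enlarged set $S'$ has $iso(G-S')<2$ so that $I(G)$ or $I'(G)$ is not directly applicable, and when $N_G(X)=V(G)$ in the $bind$ argument --- each of which has to be dealt with separately, exploiting the hypothesis $|V(G)|\geq 5$ together with the relevant toughness or binding condition.
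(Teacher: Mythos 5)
Your plan has a genuine gap at its core: the obstruction you propose to extract, $iso(G-S)+t_3(G-S)\geq |S|+1$ with $t_3$ counting only $K_3$-components, is not the correct Tutte-type condition for $\{K_2,C_{2i+1}\,|\,i\geq 2\}$-factors. The correct characterization (Cornu\'ejols--Pulleyblank, which the paper simply cites as Theorem \ref{factor}) counts \emph{all} triangular-cactus components of $G-S$ --- components in which every block is a triangle --- and these can have $5,7,\dots$ vertices, not just $1$ or $3$. A concrete counterexample to your structural claim is the bowtie $B$ (two triangles sharing a vertex): $B$ has a $\{K_2,C_{2i+1}\,|\,i\geq 1\}$-factor whose unique $C_3$ can never be eliminated (so your swap analysis is blocked), yet one checks directly that \emph{no} set $S\subseteq V(B)$ satisfies $iso(B-S)+t_3(B-S)\geq |S|+1$; the only witness is $S=\emptyset$ with the whole bowtie counted as one triangular-cactus component. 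Your key assertion that ``every blocked configuration contributes a genuine $K_3$-component of $G-S$'' therefore fails: blocked triangles can attach to one another (or to matched edges through shared cut vertices) and form larger cacti, and converting such a cactus into a $K_3$-component costs more vertices of $S$ than it gains in the count, so the inequality cannot be repaired by enlarging $S$. Since your local analysis never invokes (i)--(iv), the lemma you would need is simply false as stated; fixing it amounts to reproving the Cornu\'ejols--Pulleyblank theorem, which your sketch ("enumerate every local configuration") leaves entirely open and which is the hard combinatorial core of the whole argument.

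By contrast, the paper takes the characterization as a black box and, given $S$ with $c_{tc}(G-S)\geq |S|+1$, splits the cactus components into $a$ copies of $K_1$, $b$ copies of $K_3$, and $c$ cacti on at least $5$ vertices; the larger cacti are handled by adding to $S$ the neighbourhoods of degree-$2$ vertices chosen from distinct blocks (two such vertices per cactus for $I'$, the neighbourhood of one degree-$2$ vertex for $bind$), which is exactly the ingredient your calculations for (ii)--(iv) are missing. Even granting your obstruction in the special case $c=0$, your numeric estimates roughly match the paper's, but the boundary cases you defer (e.g.\ $iso(G-S')<2$, or $N_G(X)=V(G)$ when $G-S$ consists only of $K_3$'s) are precisely where the paper has to argue separately (its Case 1/Case 2 split for the binding number, and the $a+b+c=1$ and $b=2$ cases for $I'$), so they are not mere nuisances. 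As it stands the proposal does not constitute a proof.
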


These bounds of toughness, isolated toughness and binding number our obtain are sufficient to admit a $\{K_2,C_{2i+1}|i\geq 2\}$-factor in any graph, but not necessary, which can be easily checked by a path of even order. Note that for a graph $G$, if $t(G)\geq 1$, our conclusion can be viewed as a generalization of the result that $G$ has a $\{K_2,C_{2i+1}|i \geq 1\}$-factor due to Liu and Zhang \cite{Liu}.

In particular, for the variant of isolated toughness, if we abandon several graphs of small order, we have the following further result.
\begin{theorem}\label{th}
	Let $G$ be a graph of order at least $16$ with $I'(G)> \frac{7}{2}$. Then $G$ admits a $\{K_2,C_{2i+1}|i\geq 2\}$-factor.
\end{theorem}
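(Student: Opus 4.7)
I would argue by contradiction, refining the analysis behind Theorem~\ref{Mainth}(iii). Suppose $G$ has order at least $16$ and satisfies $I'(G) > 7/2$, yet admits no $\{K_2, C_{2i+1}\mid i\ge 2\}$-factor. The argument for Theorem~\ref{Mainth}(iii) is powered by a Tutte-type obstruction: a subset $S\subseteq V(G)$ whose removal produces many ``bad'' components, namely isolated vertices and components isomorphic to $K_3$ (the latter because $C_3$ is forbidden in the factor). I would isolate this obstruction in the schematic form
\[
iso(G-S) + \alpha\,\tau(G-S) \;\geq\; |S|+1,
\]
where $\tau(G-S)$ counts the triangle components of $G-S$ and $\alpha\geq 1$ is the weight produced by the proof of Theorem~\ref{Mainth}(iii).

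The core step is a case split on $\tau(G-S)$. If $\tau(G-S)=0$, the inequality collapses to $iso(G-S)\geq |S|+1$, whence $I'(G)\leq |S|/(iso(G-S)-1)\leq 1$, contradicting $I'(G)>7/2$. If $\tau(G-S)\geq 1$, I would enlarge $S$ to $S^{\ast} = S\cup T$, where $T$ contains two vertices from each triangle component of $G-S$. Then $|S^{\ast}| = |S|+2\tau(G-S)$ and $iso(G-S^{\ast})\geq iso(G-S)+\tau(G-S)\geq 2$, so the hypothesis $I'(G)>7/2$ applied to $S^{\ast}$ gives
\[
|S|+2\tau(G-S) \;>\; \tfrac{7}{2}\bigl(iso(G-S)+\tau(G-S)-1\bigr).
\]
Combining this with the obstruction inequality at $S$ produces a linear relation among $iso(G-S)$, $\tau(G-S)$, and $|S|$. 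Coupled with the trivial count $|V(G)| = |S| + iso(G-S) + 3\tau(G-S) + (\text{vertices in the remaining components}) \geq 16$, this should force a contradiction except on a short list of small extremal configurations.

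The remaining configurations are precisely the tight small-order examples that prevent the bound $I'(G)>5$ in Theorem~\ref{Mainth}(iii) from being strengthened in general; one would enumerate them and check directly that each has order at most $15$, so that the hypothesis $|V(G)|\geq 16$ rules them out. The main obstacle will be making the obstruction lemma fully explicit (fixing the correct value of $\alpha$ and accounting for any secondary bad component classes it produces), and then verifying the arithmetic carefully enough that the threshold $16$ is exactly right. The gain from $I'(G)>5$ to $I'(G)>7/2$ is bought precisely by the enlargement $S\mapsto S^{\ast}$, which trades $2\tau(G-S)$ extra set-vertices for $\tau(G-S)$ extra isolated vertices, and the order bound $16$ exactly pays for the few small graphs on which this trade-off degenerates.
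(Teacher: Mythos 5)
Your overall strategy---extract a deficiency set $S$ from a Tutte-type characterization, add two vertices from each triangle component of $G-S$ to manufacture isolated vertices, apply the hypothesis $I'(G)>\tfrac{7}{2}$ to the enlarged set, and rule out the few small extremal configurations by the order bound---is indeed the skeleton of the paper's argument: Theorem \ref{th} is deduced there in one line from Lemma \ref{I'}(iv), whose proof does exactly this. The genuine gap is at your very first step: the obstruction you posit, $iso(G-S)+\alpha\,\tau(G-S)\geq |S|+1$ with only isolated vertices and triangle components as ``bad'' pieces, is not the correct characterization, and no choice of $\alpha$ repairs it. By the theorem of Cornu\'ejols and Pulleyblank (Theorem \ref{factor}), $G$ lacks a $\{K_2,C_{2i+1}\mid i\geq 2\}$-factor iff some $S$ satisfies $c_{tc}(G-S)\geq |S|+1$, where the bad components are all \emph{triangular-cacti} (connected graphs each of whose blocks is a triangle), including cacti on $5,7,9,\dots$ vertices such as two triangles sharing a vertex. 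A deficiency set can produce only such large cacti---no isolated vertices and no triangle components---and then your inequality records no obstruction at all, so the contradiction argument never starts. These components cannot be absorbed into a weighted triangle count; they need their own construction: in each such cactus one picks two degree-$2$ vertices in distinct end blocks and deletes their at most $4$ neighbours, creating at least $2$ new isolated vertices per cactus at cost at most $4$.

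This missing case is not a side issue: it is precisely where the constant $\tfrac{7}{2}$ comes from. In the paper's notation ($a$ isolated vertices, $b$ triangles, $c$ larger cacti), the case $a=0$, $c\geq 1$ yields $I'(G)\leq 2+\tfrac{3}{c+1}\leq \tfrac{7}{2}$, while the pure-triangle case with $|S|=b-1$ yields $3+\tfrac{2}{b-1}$, which drops to $\tfrac{7}{2}$ only for $b\geq 5$; the exceptional configurations $b\in\{2,3,4\}$ have orders $6,7,11,15$, and this is exactly what the hypothesis $|V(G)|\geq 16$ excludes---so your guess about a short list of small exceptional orders is right in spirit, but the arithmetic only comes out after the correct obstruction and the cactus construction are in place. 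A smaller point: your collapse in the case $\tau(G-S)=0$ to $I'(G)\leq |S|/(iso(G-S)-1)\leq 1$ requires $|S|\geq 1$ (so that $iso(G-S)\geq 2$); the subcase $|S|=0$, where the deficiency is witnessed by a single bad component of $G$ itself, needs a separate argument, as in the $a+b+c=1$ case of the paper's proof of Lemma \ref{I'}.
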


In next section, we shall complete the proofs of Theorems \ref{Mainth} and \ref{th}. Futhermore,
some classes of graphs are constructed to demonstrate the tightness for the bounds our obtain on the four parameters.

\section{Proof of Theorem \ref{Mainth}}
In this section, we divide Theorem \ref{Mainth} into four lemmas to  prove and demonstrate the tightness for the bounds our obtain.
We begin with a characterization on $\{K_2,C_{2i+1}| i\geq 2 \}$-factors, which plays a vital role in our forthcoming arguments.

We call a connected graph \emph{triangular-cactus} if its each block is a triangle. Clearly, $K_3$ is a triangular-cactus. We usually also view $K_1$ as a triangular-cactus. Let
$c_{tc}(G)$ denote the number of triangular-cacti components of $G$. In the following, we present the characterization on $\{K_2,C_{2i+1}| i\geq 2 \}$-factors due to
Cornuéjols and Pulleyblank \cite{Cornuejols}.

\begin{theorem}\emph{\cite{Cornuejols}} \label{factor}
	A graph $G$ has a $\{K_2,C_{2i+1}|i\geq 2\}$-factor if and only if $c_{tc}(G-S)\leq |S|$ for any $S\subseteq V(G)$.
\end{theorem}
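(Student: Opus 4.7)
The plan is to prove a Tutte-type characterisation, splitting into an easy necessity direction and a structurally involved sufficiency direction. As a preliminary for both, I would record two features of any triangular cactus $T$: a short induction on the number of blocks (with $K_1$ as base) shows $|V(T)|$ is odd, and since every block is a $K_3$, the only cycles inside $T$ have length $3$; in particular $T$ contains no $C_{2i+1}$ with $i\geq 2$ as a subgraph.

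For \emph{necessity}, I would fix an $\{K_2,C_{2i+1}|i\geq 2\}$-factor $F$ and an arbitrary $S\subseteq V(G)$, and run a double-counting argument. For each triangular-cactus component $T$ of $G-S$, any component of $F$ lying entirely inside $T$ must be a $K_2$ (no long odd cycle fits in $T$) and thus covers an even number of vertices of $T$; since $|V(T)|$ is odd, some component of $F$ meets both $T$ and $V(G)\setminus T$, and hence meets $S$. Call such a component a \emph{crossing} component for $T$. I then count the pairs $(T,C)$ in which $C$ is a crossing component for $T$: each triangular cactus contributes at least one such pair, while a single component $C$ of $F$ contributes at most $|V(C)\cap S|$, since deleting $|V(C)\cap S|$ vertices from an edge (resp.\ a cycle) $C$ leaves at most that many arcs, each confined to a single component of $G-S$. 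Summing yields
\[c_{tc}(G-S)\ \leq\ \sum_{C}|V(C)\cap S|\ =\ |S|,\]
the last equality using that $F$ is spanning so each vertex of $S$ lies in exactly one component of $F$.

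For \emph{sufficiency} I would argue by contrapositive: assuming $G$ has no $\{K_2,C_{2i+1}|i\geq 2\}$-factor, I would produce a set $S$ with $c_{tc}(G-S)>|S|$ via a Gallai--Edmonds-type argument. Concretely, I would fix a spanning subgraph $H\subseteq G$ minimising an appropriate defect (uncovered vertices, with a penalty for each component of $H$ that is a triangle, an even cycle, or an isolated vertex), and then attempt augmentation: an alternating path from an uncovered vertex can be swapped in the standard way, while a forbidden component --- most delicately a $K_3$ --- must be broken by exchanging one of its edges for an edge to $S$. When no augmentation succeeds, a Gallai--Edmonds-style decomposition of $G$ emerges whose ``barrier'' $S$ satisfies $c_{tc}(G-S)>|S|$, each excess triangular-cactus component of $G-S$ being an unabsorbable odd-parity piece.

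The main obstacle will be the blossom-style bookkeeping in the sufficiency argument: classical matching theory controls degree constraints but not the ban on $C_3$ and even cycles, so triangles need to be treated as blossoms in their own right, and one must verify that the only irreducible obstruction to augmentation is a triangular cactus rather than some more exotic block-structure. An alternative would be to recast the factor problem as matroid intersection (degree restrictions against an independence system excluding the unwanted cycle shapes) and apply Edmonds' min-max theorem, but identifying triangular cacti as the tight sets in the dual inequality would still require essentially the same structural analysis.
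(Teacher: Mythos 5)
First, a point of calibration: the paper itself gives no proof of this statement --- it is imported wholesale from Cornu\'ejols and Pulleyblank \cite{Cornuejols} and used as a black box --- so the comparison here is against what a self-contained proof would require rather than against any argument in the text. Your necessity direction meets that standard: the two preliminary facts about a triangular cactus $T$ (odd order, and no cycles other than triangles, hence no $C_{2i+1}$ with $i\geq 2$ inside $T$) do force every triangular-cactus component of $G-S$ to be met by some $F$-component that also meets $S$, and the pair-counting bound --- deleting the $|V(C)\cap S|$ vertices of $S$ from an edge or cycle $C$ leaves at most $|V(C)\cap S|$ connected pieces, each confined to one component of $G-S$ --- correctly gives $c_{tc}(G-S)\leq \sum_{C}|V(C)\cap S|=|S|$. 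That half is complete and correct.

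The sufficiency direction, however, is a plan rather than a proof, and the part you leave open is precisely the substance of the theorem. Asserting that a defect-minimal spanning subgraph either admits an augmentation or exhibits a Gallai--Edmonds-style barrier $S$ with $c_{tc}(G-S)>|S|$ is exactly what must be established: the exchange step when the offending component is a triangle (your ``blossoms in their own right''), the argument that repeated shrinking/failed augmentation can only terminate in triangular cacti rather than some other block structure, and the verification that a single set $S$ simultaneously witnesses all excess components are all unaddressed, and you flag this yourself as ``the main obstacle.'' As written, the hard direction is not proved; you would either need to carry out that structural analysis in full --- essentially reproving the Cornu\'ejols--Pulleyblank min--max theorem for perfect triangle-free $2$-matchings, together with the routine remark that even cycles in such a $2$-matching can be replaced by alternating $K_2$'s to land in the family $\{K_2,C_{2i+1}\mid i\geq 2\}$ --- or do what the paper does and cite \cite{Cornuejols} for the equivalence.
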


We now prove our main results in the following.

\begin{lemma}
	Let $G$ be a graph of order at least $5$. If $G$ admits no $\{K_2,C_{2i+1}|i\geq 2\}$-factors, then $t(G)< 1$.
\end{lemma}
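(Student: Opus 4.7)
My plan is to invoke the Cornu\'ejols--Pulleyblank characterization (Theorem \ref{factor}) to extract a witness set $S\subseteq V(G)$ with $c_{tc}(G-S)\geq |S|+1$, and then use (a slight modification of) $S$ to produce a quotient $|S'|/c(G-S')<1$. The overall structure is a simple case split on whether $|S|$ is positive or zero.

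First, in the main case $|S|\geq 1$, I would take $S'=S$ directly. Since every triangular-cactus component of $G-S$ is a component of $G-S$, we have
\[
c(G-S)\;\geq\;c_{tc}(G-S)\;\geq\;|S|+1\;\geq\;2,
\]
so $S$ is admissible in Chv\'atal's minimum and yields
\[
t(G)\;\leq\;\frac{|S|}{c(G-S)}\;\leq\;\frac{|S|}{|S|+1}\;<\;1.
\]

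The only subtlety is the degenerate case $|S|=0$, where the characterization gives $c_{tc}(G)\geq 1$ but the quotient $0/c(G)$ does not by itself witness $c(G-S')\geq 2$. Here I would split further: if $G$ is disconnected then $c(G)\geq 2$ already exhibits $t(G)\leq 0<1$; otherwise $G$ is itself a triangular cactus on at least $5$ vertices. A connected triangular cactus with $k$ triangular blocks has $2k+1$ vertices, so the hypothesis $|V(G)|\geq 5$ forces $k\geq 2$. Any vertex shared by two triangular blocks is a cut vertex $v$, and removing $v$ yields at least two components, one for each incident block. Taking $S'=\{v\}$ then gives $|S'|/c(G-S')\leq 1/2<1$, as required.

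The only genuine obstacle is this last subcase, and it is precisely where the hypothesis $|V(G)|\geq 5$ is needed: without it, $G=K_1$ or $G=K_3$ would be counterexamples (they admit no $\{K_2,C_{2i+1}\mid i\geq 2\}$-factor yet have infinite toughness as complete graphs). The triangular-cactus structural fact (existence of a cut vertex when $k\geq 2$) is what makes the argument close, and I would state it as a brief structural observation rather than proving it in detail.
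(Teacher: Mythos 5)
Your proposal is correct and follows essentially the same route as the paper: invoke the Cornuéjols--Pulleyblank characterization to get $S$ with $c_{tc}(G-S)\geq|S|+1$, use $|S|/c(G-S)\leq|S|/(|S|+1)<1$ in the generic case, and handle the degenerate case where $G$ itself is a triangular cactus (here the order-at-least-$5$ hypothesis is used) by deleting a cut vertex to get a quotient at most $1/2$. The only cosmetic difference is that you split on $|S|\geq 1$ versus $|S|=0$ while the paper splits on $c(G-S)\geq 2$ versus $c(G-S)=1$; the cases match up and both arguments close in the same way.
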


\begin{proof}
	By Theorem \ref{factor},
	there exists an $S\subseteq V(G)$ such that $c_{tc}(G-S)\geq |S|+1$. Note that $c (G-S)\geq c_{tc}(G-S)\geq |S|+1$. If $c(G-S)\geq 2$, then
	\[t(G)\leq \frac{|S|}{c (G-S)}\leq \frac{c(G-S)-1}{c(G-S)}<1.\]
   If $c(G-S)=1$, then $G$ is a triangular-cactus with at least $5$ vertices. Clearly, there exists a cut vertex $v$ in $G$, that is, $c (G-\{v\})\geq 2$. Thus
	\[t(G)\leq \frac{|\{v\}|}{c (G-\{v\})}\leq \frac{1}{2}.\]
	The proof is complete.
\end{proof}

{\bf Remark 1. }
Let us show that $t(G)<1$ is tight. Let $G_m$ be a graph obtained from a completed graph $K$ of order $m-1$ by joining $m$ independent vertices to each  vertex of $K$ for $m\geq 3$.
Note that $G_m$ has no $\{K_2,C_{2i+1}|i \geq 2\}$-factor as $c_{tc}(G_m-V(K))>|V(K)|$. If $c(G_m-S)\geq 2$, then $V(K)\subseteq S$ and $c(G_m-S)= 2m-1-|S|$, which implies that $|S|\geq m-1$. Hence
\[\frac{|S|}{c(G_m-S)}=\frac{|S|}{2m-1-|S|}=-1+ \frac{2m-1}{2m-1-|S|}. \]
Therefore, $t(G_m)=\frac{m-1}{m}<1$, and $\lim\limits_{m\to+\infty}t(G_m)=1$.

\begin{lemma}
	Let $G$ be a graph without $\{K_2,C_{2i+1}|i\geq 2\}$-factors. Then $I(G)<3$.
\end{lemma}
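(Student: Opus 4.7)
The plan is to invoke Theorem \ref{factor} to obtain a set $S\subseteq V(G)$ with $c_{tc}(G-S)\ge |S|+1$, and then to enlarge $S$ by removing two vertices from each non-trivial triangular-cactus component of $G-S$, exposing one fresh isolated vertex per component. Throughout, write $p=|S|$ and $q=c_{tc}(G-S)\ge p+1$, and split the $q$ triangular-cactus components into $q_1$ copies of $K_1$ and $q_2=q-q_1$ components of order at least three.

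For each non-trivial triangular-cactus component $T$ of $G-S$, the block-tree of $T$ has a leaf block, i.e.\ a triangle $\{b,v_1,v_2\}$ in which $b$ is a non-cut vertex of $T$ (when $T\cong K_3$, any vertex works). Since $N_T(b)=\{v_1,v_2\}$ and $T$ is a component of $G-S$, setting $X_T=\{v_1,v_2\}$ and $S'=S\cup\bigcup_T X_T$ makes $b$ isolated in $G-S'$, while the $q_1$ pre-existing $K_1$-components remain isolated. Therefore $|S'|=p+2q_2$ and $iso(G-S')\ge q_1+q_2=q$, and the key computation is
\[
\frac{|S'|}{iso(G-S')}\le \frac{p+2q_2}{q}\le \frac{(q_1+q_2-1)+2q_2}{q_1+q_2}=3-\frac{2q_1+1}{q}<3,
\]
using $p\le q-1$. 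Together with the requirement $iso(G-S')\ge 2$, this gives $I(G)<3$.

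The principal obstacle is to guarantee the condition $iso(G-S')\ge 2$, which the construction delivers only when $q\ge 2$. In the degenerate case $q=1$, which forces $S=\varnothing$ and $G$ to have exactly one triangular-cactus component, the construction produces only one isolated vertex. This case is handled by a separate ad hoc argument: one exploits two distinct leaf blocks of the unique triangular-cactus component $T$ (available whenever $T$ has at least two triangles, hence whenever its order is at least five) to isolate two vertices of $T$ simultaneously at the cost of at most four deletions, or removes an inexpensive additional vertex from some non-cactus component of $G$ to expose a second isolated vertex. Either way the ratio stays strictly below $3$, paralleling the treatment of $c(G-S)=1$ at the end of the preceding lemma.
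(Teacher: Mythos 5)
Your main construction and computation coincide with the paper's own proof: starting from a set $S$ with $c_{tc}(G-S)\ge |S|+1$ supplied by Theorem \ref{factor}, you delete the two neighbours of a non-cut vertex of a leaf block in each triangular-cactus component of order at least $3$, and your bound $\frac{p+2q_2}{q}\le \frac{q_1+3q_2-1}{q_1+q_2}<3$ is literally the paper's $\frac{a+3b-1}{a+b}<3$ with $a=q_1$, $b=q_2$. So in the case $q=c_{tc}(G-S)\ge 2$ your argument is correct and essentially identical to the paper's, and you deserve credit for noticing explicitly that the definition of $I(G)$ requires $iso(G-S')\ge 2$, a point the paper passes over in silence.

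The genuine gap is in your handling of the degenerate case $q=1$ (so $S=\emptyset$ and $G$ has exactly one triangular-cactus component $T$). The first branch, using two distinct leaf blocks of $T$ when $|V(T)|\ge 5$, is fine and costs at most four deletions for two isolated vertices. But the fallback, ``remove an inexpensive additional vertex from some non-cactus component of $G$ to expose a second isolated vertex,'' is unsupported: a non-cactus component need not contain any cheaply isolatable vertex. If that component is a large complete graph, isolating one of its vertices requires deleting all of its neighbours, and the ratio is unbounded. Indeed this case cannot be closed at all, because the statement fails there: for $G=K_3\cup K_m$ with $m$ large and even, $G$ has no $\{K_2,C_{2i+1}\mid i\ge 2\}$-factor (the $K_3$ component cannot be covered), yet the only way to produce two isolated vertices is to delete the two remaining vertices of the $K_3$ together with all $m-1$ neighbours of a vertex of $K_m$, so $I(G)=\frac{m+1}{2}\gg 3$. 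To be fair, the paper's proof has the same unacknowledged defect -- it divides by $iso(G-Y)=a+b$ without ensuring $a+b\ge 2$, which is exactly where such examples live -- so your instinct to isolate this case was sound, but the sketch you offer for it does not (and, without additional hypotheses such as connectivity of $G$, cannot) repair it.
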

\begin{proof}
By Theorem \ref{factor}, there exists an $S\subseteq V(G)$ such that $c_{tc}(G-S)\geq |S|+1$. Let $a$ denote the number of $K_1$ in $G-S$. Let $T_1, T_2, \ldots,T_b$ be the components with at least $3$ vertices in $G-S$. So we rewrite
$c_{tc}(G-S)=a+b\geq |S|+1$, and then $|S|\leq a+b-1$.
Take two neighbours of a vertex of degree $2$ in $T_i$ to form a set $X_i$, and
set $X=\bigcup\limits_{i=1}^b X_i$. Clearly, $|X|=\sum\limits_{i=1}^b |X_i|=2b$ and $\sum\limits_{i=1}^{b}iso(T_i-X_i)=b$.
Set $Y=S\cup X$. Then we have
\[|Y|=|S|+ |X|\leq a+b-1+2b=a+3b-1,\]
and
\[iso(G-Y)=a+b.\]
Therefore,
\[I(G)\leq \frac{|Y|}{iso(G-Y)}\leq \frac{a+3b-1}{a+b}=3+\frac{-2a-1}{a+b}<3.\]
This completes the proof.
\end{proof}

{\bf Remark 2. }
Let us show that $I(G)<3$ is tight. Let $H_m$ be a graph obtained from completed graph $K$ of order $m-1$ by joining each vertex of $m$ disjoint $K_3$'s to each vertex of $K$ for $m\geq 3$.
Note that $H_m$ has no $\{K_2,C_{2i+1}|i \geq 2\}$-factor as $c_{tc}(H_m-V(K))>|V(K)|$. Let
$S\subseteq V(H_m)$ such that $iso(H_m-S)\geq 2$.
Then $V(K)\subseteq S$. So we have $iso(H_m-S) \leq m$ and $|S|\geq m-1+2iso(H_m-S)$.
Hence
\[\frac{|S|}{iso(H_m-S)}\geq\frac{m-1+2iso(H_m-S)}{iso(H_m-S)}\geq \frac{3m-1}{m}.\]
In particular, if we take two vertices from each $K_3$ and $V(K)$ as the set $S'$, then
$\frac{|S'|}{iso(H_m-S')}=\frac{3m-1}{m}$.
Therefore $I(H_m)=\frac{3m-1}{m}<3$,
and $\lim\limits_{m\to+\infty}I(H_m)=3$.

\begin{figure}[!t]
	\begin{center}
		\includegraphics[width=0.90\textwidth,height=20mm]{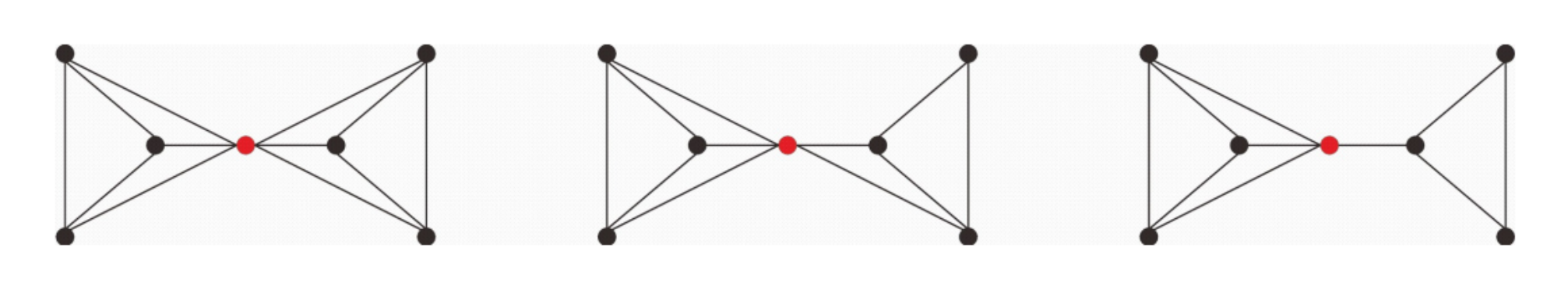}
		\put(-257,-29){\mbox{\scriptsize\quad Fig. 1. Three exceptions in Lemma \ref{I'}.}}
	\end{center}
\end{figure}
\begin{lemma}\label{I'}
	Let $G$ be a graph of order $n$ containing no $\{K_2,C_{2i+1}|i\geq 2\}$-factors with $n\geq5$. Then
	
	{\bf(i)} $I'(G)\leq 5$;
	
	{\bf(ii)} $I'(G)\leq 4$ if $G$ is not one of three graphs in Fig 1;
	
	{\bf(iii)} $I'(G)\leq \frac{11}{3}$ if $n\notin\{6,7,11\}$;
	
	{\bf(iv)} $I'(G)\leq \frac{7}{2}$ if $n\notin\{6,7,11,15\}$.
\end{lemma}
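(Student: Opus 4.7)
The plan is to apply Theorem~\ref{factor}, obtaining some $S\subseteq V(G)$ with $c_{tc}(G-S)\ge|S|+1$, and to enlarge $S$ to a set $Y$ so that $G-Y$ contains many isolated vertices. Denote by $a$ the number of $K_1$ components of $G-S$ and by $T_1,\dots,T_b$ the remaining triangular-cactus components of $G-S$, each of order $\ge 3$, so that $a+b\ge |S|+1$. Inside each $T_j$ pick a non-cut vertex $v_j$ of a leaf triangle of $T_j$; then $v_j$ has degree $2$, and letting $X_j$ be the two neighbours of $v_j$ in $T_j$ makes $v_j$ isolated in $T_j-X_j$. Writing $Y=S\cup X_1\cup\dots\cup X_b$ gives
\[
|Y|\le a+3b-1,\qquad iso(G-Y)\ge a+b,
\]
so whenever $a+b\ge 2$,
\[
I'(G)\ \le\ \frac{a+3b-1}{a+b-1}.
\]
An elementary check shows the right-hand side is at most $5,\,4,\,11/3,\,7/2$ precisely when $2a+b\ge 2,\,3a+b\ge 3,\,4a+b\ge 4,\,5a+b\ge 5$, respectively.

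Each of the four claims therefore reduces to examining the finitely many small pairs $(a,b)$ violating the relevant inequality. Two refinements of the construction close the remaining cases. First, if some $T_j$ contains at least two triangles then one can outperform the ``$2$ deletions per isolated vertex'' rate: by choosing two disjoint leaf triangles of $T_j$ and deleting their shared cut vertex together with one non-cut vertex from each, three deletions isolate two vertices; more generally, a triangular cactus with $t$ triangles admits $t$ isolated vertices at a cost of $t+1$ deletions. Second, when every $T_j$ equals $K_3$, the hypotheses $n\ge5$ in~(i)--(ii), $n\notin\{6,7,11\}$ in~(iii), and $n\notin\{6,7,11,15\}$ in~(iv) force $G-S$ to contain a non-cactus component $C$; taking a minimum-degree vertex $u$ of $C$ and deleting its neighbours produces one more isolated vertex at a cost equal to the minimum degree of $C$. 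Combining these ideas with the baseline construction pushes $|Y|/(iso(G-Y)-1)$ below the required threshold outside the explicitly listed exceptions.

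The main obstacle will be the case analysis in parts~(iii) and~(iv). One must enumerate every $(a,b)$ with $4a+b\le 3$ (resp.\ $5a+b\le 4$) together with every compatible $|S|$, triangle-count list $(t_1,\dots,t_b)$, and non-cactus component structure, and verify that whenever $n$ avoids the excluded values at least one refinement yields ratio at most $11/3$ (resp.\ $7/2$). The sharpest configurations occur when the $T_j$'s are all single triangles and the non-cactus components are near-cliques of large minimum degree, so that every extra isolated vertex is expensive; these rigid configurations determine both the three exception graphs in Fig.~1 (for (ii)) and the excluded orders $n\in\{6,7,11\}$ in (iii), $n\in\{6,7,11,15\}$ in (iv).
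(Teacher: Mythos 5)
Your overall skeleton (invoke Theorem~\ref{factor}, enlarge $S$ to a set $Y$ that creates isolated vertices, bound $|Y|/(iso(G-Y)-1)$) is the same as the paper's, and your baseline bound $I'(G)\le\frac{a+3b-1}{a+b-1}$ for $a+b\ge 2$ is correct. The gaps are in the two ``refinements'' that you rely on to close the remaining cases. First, the claim that a triangular cactus with $t$ triangles admits $t$ isolated vertices after only $t+1$ deletions is false: take a central triangle with a pendant triangle glued to each of its three vertices ($t=4$, nine vertices). Producing four isolated vertices among the four undeleted vertices would force the five deleted vertices to form a vertex cover, but the minimum vertex cover of this cactus has six vertices. (Your ``three deletions isolate two vertices'' step also presumes that two leaf triangles share a cut vertex, which need not happen.) The fact that is actually needed, and that the paper uses, is much weaker: every cactus component with at least five vertices contains two degree-$2$ vertices in distinct blocks, so two isolated vertices cost at most four deletions. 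The paper keeps the $K_3$-components ($b$) and the larger cacti ($c$) separate, obtaining $|Y|\le a+3b+5c-1$ and $iso(G-Y)\ge a+b+2c$, which at once gives $I'\le 3$ when $a\ge1$ and $I'\le\frac72$ when $c\ge1$, so that only the all-$K_3$ configurations remain.

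The second refinement is where the proposal genuinely breaks, and it is also the point you would have to confront to finish (ii)--(iv), which in your text are only announced (``one must enumerate\dots'') and never carried out. It is not true that, when every $T_j$ is a $K_3$, the hypotheses on $n$ force $G-S$ to contain a non-cactus component (e.g.\ three triangles with $|S|=1$, $n=10$, no other components); and when a non-cactus component $C$ does exist, isolating a minimum-degree vertex of $C$ costs $\delta(C)$ deletions, which can be arbitrarily large (a clique), so it does not push the ratio below $\frac{11}{3}$ or $\frac72$. In other words, vertices of $G-S$ lying in non-cactus components cannot be converted into cheap isolated vertices, yet they make $n$ unrelated to $a$, $b$, $|S|$, so the excluded orders cannot be recovered by your enumeration. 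This is in fact the delicate spot that the paper's own proof glosses over (it passes from ``$a=c=0$, $b=2$, $|S|=1$'' directly to ``$n=7$'', tacitly assuming $G-S$ has no further components): the graph consisting of one vertex joined to all vertices of two disjoint triangles and of a disjoint $K_{10}$ has no $\{K_2,C_{2i+1}\,|\,i\ge2\}$-factor, order $17$, and $I'=5$, showing that the exceptional configurations are not captured by $n$ alone. So, beyond being an incomplete sketch, the route as proposed cannot close parts (ii)--(iv) without an honest treatment of these extra components.
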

\begin{proof}
	By Theorem \ref{factor},
	there exists an $S\subseteq V(G)$ such that $c_{tc}(G-S)\geq |S|+1$. Let $a$, $b$ and $c$ denote the number of $K_1$, $K_3$ and triangular-cactus with at least $5$ vertices in $G-S$, respectively, and let $T_1, T_2, \ldots,T_c$ be the components of triangular-cactus with at least $5$ vertices. So we rewrite
	$c_{tc}(G-S)=a+b+c\geq |S|+1$, and then $|S|\leq a+b+c-1$.
	We take two vertices from every $K_3$ to form a set $R$ in $G-S$. Clearly, $|R|=2b$.
	
	Suppose that $a+b+c=1$, and then we have $|S|=0$. Since $n\geq5$, we have $a=b=0$ and $c=1$, which implies that $G$ is a triangular-cactus. If $n=5$, then there are two vertices $u$ and $v$ of degree $2$ from distinct two blocks and $|N_G(u)\cup N_G(v)|=3$. Take $X=N_G(u)\cup N_G(v)$. Then
	\[I'(G)\leq \frac{|X|}{iso(G-X)-1}=\frac{3}{2-1}= 3.\]
   If $n\geq 6$, then there exist three vertices $u,v$ and $w$ of degree $2$ from distinct three blocks. Take $X=N_G(u)\cup N_G(v)\cup N_G(w)$. Then $|X|\leq 6$ and hence
	\[I'(G)\leq \frac{|X|}{iso(G-X)-1}\leq\frac{6}{3-1}= 3.\]
	
	Suppose that $a+b+c\geq 2$. Note that there exist two vertices $u_i, v_i$ of degree $2$ from distinct blocks in $T_i$ for $i=1,2,\ldots,c$. Take $X_i=N(u_i)\cup N(v_i)$ and $X=\bigcup\limits_{i=1}^cX_i$. Clearly $|X|\leq 4c$ and $iso(G-X)\geq 2c$. Let $Y=S\cup R\cup X$. Then
	\[|Y|=|S|+|R|+|X|\leq a+b+c-1+2b+4c=a+3b+5c-1\]
	and $iso(G-Y)\geq a+b+2c\geq 2$.
	So we have
	\begin{align}
	I'(G)&\leq \frac{|Y|}{iso(G-Y)-1}\notag\\
	&\leq \frac{a+3b+5c-1}{a+b+2c-1}\notag\\
	&=3+\frac{-2a-c+2}{a+b+2c-1}\notag\\
	&\leq 3+\frac{-2a-c+2}{c+1}\notag\\
	&=2+\frac{-2a+3}{c+1}\\
	&\leq 2+\frac{3}{c+1}\\
	&\leq 5\notag.
	\end{align}
Thus we complete the proof of {\bf(i)}.
	
If $a\geq 1$, then, by inequality (1), $I'(G)\leq 3$. If $a=0$ and $c\geq 1$, then, by inequality (2), $I'(G)\leq \frac{7}{2}$. It remains to consider the case $a=c=0$ and $b\geq 2$ to complete the proofs of {\bf(ii)}, {\bf(iii)} and {\bf(iv)}. It implies that $|S|\leq b-1$.

Suppose that $b=2$, so we have $|S|\leq 1$. If $|S|=0$, then $n=6$. We have
\[I'(G)\leq \frac{|R|}{iso(G-R)-1}=\frac{4}{2-1}=4.\]
If $|S|=1$, then $n=7$. Let $S=\{s\}$. Since $G$ is not one of three graphs in Fig 1, we can take two vertices $u$ and $v$ from two distinct $K_3$'s
such that they are not adjacent to $s$. Set $X=V(G)\setminus \{s,u,v\}$. Then
\[I'(G)\leq \frac{|X|}{i(G-X)-1}=\frac{4}{3-1}=2.\]

Suppose that $b\geq 3$. Take $Y=S\cup R$. Then $|Y|=|S|+|R|$ and $iso(G-Y)= b$. If $|S|\leq b-2$, then
\[I'(G)\leq \frac{|Y|}{iso(G-Y)-1}\leq\frac{3b-2}{b-1}=3+\frac{1}{b-1}\leq \frac{7}{2}.\]
If $|S|= b-1$, then
\[I'(G)\leq \frac{|Y|}{iso(G-Y)-1}=\frac{3b-1}{b-1}=3+\frac{2}{b-1}.\]
Clearly, $I'(G)\leq 4$ when $b=3$ (i.e., $n=11$), $I'(G)\leq \frac{11}{3}$ when $b=4$ (i.e., $n=15$), and $I'(G)\leq \frac{7}{2}$ when $b\geq 5$. Therefore {\bf(ii)}, {\bf(iii)} and {\bf(iv)} hold.
\end{proof}

{\bf Remark 3. }
Similar to Remark 2, we can obtain that $I'(H_m)=\frac{3m-1}{m-1}$.
Therefore, we have $I'(H_2)=5$, $I'(H_3)=4$, $I'(H_4)=\frac{11}{3}$ and $I'(H_5)=\frac{7}{2}$. This implies that the bounds are tight in Lemma \ref{I'}.

Theorem \ref{th} is an immediate result of Lemma \ref{I'} {\bf(iv)}. We consider the binding number of a graph without $\{K_2,C_{2i+1}|i\geq 2\}$-factor in the following.

\begin{lemma}\label{bind}
	Let $G$ be a graph of order at least $5$. If $G$ admits no $\{K_2,C_{2i+1}|i\geq 2\}$-factors, then $bind(G)\leq \frac{4}{3}$.
\end{lemma}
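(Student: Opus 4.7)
By Theorem \ref{factor}, there exists $S \subseteq V(G)$ with $c_{tc}(G-S) \geq |S|+1$. Classify the components of $G-S$: let $a$, $b$, $c$ denote the numbers of $K_1$'s, $K_3$'s, and triangular-cactus components of order $\geq 5$, respectively, and let $d$ denote the number of the remaining (non-triangular-cactus) components. Then $a+b+c \geq |S|+1$, and summing component orders yields $n \geq |S| + a + 3b + 5c + 2d$ (a triangular cactus has $2k+1$ vertices for $k$ blocks, and any non-cactus component has at least $2$ vertices).

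The unifying strategy is to take $T = V(G-S) \setminus X$, where $X \subseteq V(G-S)$ is a small ``peripheral'' set chosen so that no vertex of $X$ has a neighbour in $T$; consequently $X \cap N_G(T) = \emptyset$ and $|N_G(T)| \leq n - |X|$, while $|T| = n - |S| - |X|$. The desired inequality $|N_G(T)|/|T| \leq \tfrac{4}{3}$ reduces to $4|S| \leq n - |X|$, which I would verify in each case by combining the order bound $n \geq |S| + a + 3b + 5c + 2d$ with the factor inequality $|S| \leq a + b + c - 1$.

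The case split is: (i) if $a \geq 1$, take $X = \{v_0\}$ for an isolated vertex $v_0$; here one also observes that \emph{every} isolated vertex of $G-S$ is missed by $N_G(T)$ (its neighbours lie in $S$, which is disjoint from $T$), giving the sharper bound $|N_G(T)| \leq n - a$ that the ratio calculation actually needs, and reducing the problem to $a + 2c + 2d \geq 1$; (ii) if $a = 0$ and $b \geq 1$, take $X = V(K_3)$ for a $K_3$-component; (iii) if $a = b = 0$, $c \geq 1$, and some component of $G-S$ exists besides a chosen large cactus $T_1$ (i.e.\ $c \geq 2$ or $d \geq 1$), take $X = V(T_1)$. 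In each of (i)--(iii), a short arithmetic check using $|S| \leq a + b + c - 1$ yields the required ratio, and the restriction $n \geq 5$ is used to rule out the degenerate possibilities where $|T|$ would be $0$.

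The main obstacle is the remaining exceptional case $a = b = 0$, $c = 1$, $d = 0$, in which $|S| = 0$ forces $G = T_1$ to be a single triangular cactus of order $n \geq 5$. For $n \geq 7$ the argument adapts: let $X$ be the vertex set of a leaf block $B$ of $G$, with cut vertex $w$ and non-cut vertices $u, v$; then $u, v \notin N_G(T)$ (their only neighbours are $\{v, w\}$ and $\{u, w\}$, none in $T$), so $|N_G(T)| \leq n-2$ while $|T| = n-3$, giving ratio $\leq \tfrac{4}{3}$ once $n \geq 6$. The genuinely tight sub-case is $n = 5$ with $G = F_2$ (two triangles sharing a central vertex $c_0$, with leaves $a_1, a_2$ and $b_1, b_2$), where the leaf-block construction only yields ratio $\tfrac{3}{2}$. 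For this single graph I would instead exhibit the bespoke set $T = \{a_1, a_2, b_1\}$, for which a direct computation gives $N_G(T) = \{a_1, a_2, b_2, c_0\}$, so $|N_G(T)|/|T| = 4/3$ and $N_G(T) \neq V(G)$ (since $b_1 \notin N_G(T)$), realising $bind(G) \leq \tfrac{4}{3}$ exactly.
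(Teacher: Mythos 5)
Your proposal is correct, and its skeleton coincides with the paper's proof: both invoke Theorem \ref{factor} to obtain $S$ with $c_{tc}(G-S)\geq |S|+1$, classify the components of $G-S$ by type, and exhibit an explicit witness set whose neighbourhood-to-size ratio is at most $\frac{4}{3}$. Your case $a\geq 1$ (including the observation that \emph{all} isolated vertices of $G-S$ are missed by the neighbourhood, which, as you correctly note, the arithmetic genuinely needs) and your case $a=0$, $b\geq 1$ match the paper's first case and its Case 1 almost verbatim. The one real divergence is the case $a=b=0$, $c\geq 1$: the paper takes $Y=V(G)\setminus(S\cup N_{T_1}(v))$ for a degree-$2$ vertex $v$ of a large cactus $T_1$, so that $v\notin N_G(Y)$ and the estimate $1+\frac{|S|+1}{3b+5c-2}\leq\frac{4}{3}$ goes through uniformly with no exceptional graphs; you instead delete all of $T_1$, which works when another component is present but forces a separate treatment of the lone-cactus situation via leaf blocks and a bespoke set $T=\{a_1,a_2,b_1\}$ for the order-$5$ bowtie (whose verification is correct, and which is exactly the tight example in the paper's Remark 4). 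Both routes are sound; the paper's degree-$2$-neighbour trick buys uniformity (it is the same device used in Lemma \ref{I'}), while your component-deletion choice simplifies the generic arithmetic at the cost of the extra exceptional analysis, and your explicit bookkeeping of non-cactus components via $d$ is harmless, since the paper simply ignores them as they only enlarge $|V(G)|$.
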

\begin{proof}
	By Theorem \ref{factor},
	there exists an $S\subseteq V(G)$ such that $c_{tc}(G-S)\geq |S|+1$. Let $A$, $B$ and $C$ denote the collections of $K_1$, $K_3$ and triangular-cactus with at least $5$ vertices in $G-S$, respectively, and let
	$|A|=a$, $|B|=b$, $|C|=c$. Let $T_1, T_2, \ldots,T_c$ be the components of triangular-cactus with at least $5$ vertices. So we rewrite
	$c_{tc}(G-S)=a+b+c\geq |S|+1\geq 1$, and then $|S|\leq a+b+c-1$. Note that $|V(G)|\geq a+3b+5c+|S|$. If $a\neq 0$, we take $Y=V(G)\setminus S$. Note that $N_G(Y)=V(G)\setminus A\neq V(G)$. Thus
\begin{align*}
	bind(G)&\leq \frac{|N_G(Y)|}{|Y|}\\
	&=\frac{|V(G)|-a}{|V(G)|-|S|}\\
	&=1+\frac{|S|-a}{|V(G)|-|S|}\\
	&\leq 1+\frac{b+c-1}{a+3b+5c}\\
	&< 1+\frac{b+c}{3b+3c}\\
	&=\frac{4}{3}.
\end{align*}
We now divide two cases to complete the proof for $a= 0$ in the following.
	
	{\bf Case 1.} $c=0$.
	
	Note that $b\geq 2$ as $|V(G)|\geq 5$. Clearly $|S|\leq b-1$.
	Let $Y$ be a set of all vertices from $b-1$ $K_3$'s in $B$. Clearly $|Y|=3(b-1)$. Note that $N_G(Y)\subseteq Y\cup S\neq V(G)$,
	and then $|N_G(Y)|\leq 3(b-1)+|S|\leq 4(b-1)$.
	Thus
	\[
	bind(G)\leq \frac{|N_G(Y)|}{|Y|}\leq\frac{4(b-1)}{3(b-1)}=\frac{4}{3}.
	\]

	{\bf Case 2.} $c\geq 1$.

 Let $v$ be a vertex of degree $2$ in $T_1$. Then $|N_{T_1}(v)|=2$. Set $Y=V(G)\setminus (S\cup N_{T_1}(v))$. Then
$N_G(Y)\subseteq V(G)\setminus \{v\}.$
	Clearly
	$|Y|=|V(G)|-|S|-2$
	 and $|N_G(Y)|\leq |V(G)|-1$.
Then
\begin{align*}
	bind(G)&\leq \frac{|N_G(Y)|}{|Y|}\\
	&\leq\frac{|V(G)|-1}{|V(G)|-|S|-2}\\
	&=1+\frac{|S|+1}{|V(G)|-|S|-2}\\
	&\leq 1+\frac{|S|+1}{|V(B)|+|V(C)|-2}\\
	&\leq 1+\frac{b+c}{3b+5c-2}\\
	&\leq 1+\frac{b+c}{3b+3c}\\
	&=\frac{4}{3}.
	\end{align*}
The proof is complete.
\end{proof}

{\bf Remark 4. }
Recall the graph $H_m$ defined in Remark 2. Let $Y\subseteq V(H_m)$ such that $N_{H_m}(Y)\neq V(H_m)$. Then $|Y\cap V(K)|\leq 1$. If $|Y\cap V(K)|=1$, then
$|Y\cap (V(H_m)\setminus V(K))|=0$ as $N_{H_m}(Y) \neq V(H_m)$. Therefore we have $\frac{|N_G(Y)|}{|Y|}=\frac{|V(G)|-1}{1}>\frac{4}{3}$.
If $|Y\cap V(K)|=0$, let $m_i$ be the number of $K_3$'s such that $|Y\cap V(K_3)|=i$ in $H_m-V(K)$, where $i=1,2,3$. Clearly, $|Y|=m_1+2m_2+3m_3$ and $|N_{H_m}(Y)|= m-1+2m_1+3m_2+3m_3$. Since $N_{H_m}(Y) \neq V(H_m)$, we have $m_2+m_3\leq m-1$. Then
\begin{align*}
	\frac{|N_{H_m}(Y)|}{|Y|}&=\frac{m-1+2m_1+3m_2+3m_3}{m_1+2m_2+3m_3}  \\
	&=2+\frac{m-1-m_2-3m_3}{m_1+2m_2+3m_3}\\
	&\geq 2-\frac{2m_3}{m_1+2m_2+3m_3}\\
	&=\frac{4}{3}+\frac{2m_1+4m_2}{3(m_1+2m_2+3m_3)}\\
	&\geq\frac{4}{3}.
\end{align*}
By Lemma \ref{bind}, we have $bind(H_m)=\frac{4}{3}$, which implies that the bound is tight. In addition, we can also take a simple example to show the tightness of our bound. Let $G$ denote a triangular-cactus with $5$ vertices. Obviously, $G$ has no $\{K_2, C_{2i+1}|i\geq 2\}$-factor and $bind(G)=\frac{4}{3}$.

\end{document}